\title{Distribution of the Maximum and Minimum of a Random Number of Bounded Random Variables}
\author {Jie Hao and Anant Godbole\\
Department of Mathematics and Statistics\\
East Tennessee State University}
\begin{document}
\def\qed{\vbox{\hrule\hbox{\vrule\kern3pt\vbox{\kern6pt}\kern3pt\vrule}\hrule}}
\def\ms{\medskip}
\def\n{\noindent}
\def\ep{\varepsilon}
\def\G{\Gamma}
\def\lr{\left(}
\def\ls{\left[}
\def\rs{\right]}
\def\lf{\lfloor}
\def\rf{\rfloor}
\def\lg{{\rm lg}}
\def\lc{\left\{}
\def\rc{\right\}}
\def\rr{\right)}
\def\ph{\varphi}
\def\p{\mathbb P}
\def\nk{n \choose k}
\def\a{\cal A}
\def\s{\cal S}
\def\e{\mathbb E}
\def\v{\mathbb V}
\def\l{\lambda}
\newcommand{\bsno}{\bigskip\noindent}
\newcommand{\msno}{\medskip\noindent}
\newcommand{\oM}{M}
\newcommand{\omni}{\omega(k,a)}
\newtheorem{thm}{Theorem}[section]
\newtheorem{con}{Conjecture}[section]
\newtheorem{claim}[thm]{Claim}
\newtheorem{deff}[thm]{Definition}
\newtheorem{lem}[thm]{Lemma}
\newtheorem{cor}[thm]{Corollary}
\newtheorem{rem}[thm]{Remark}
\newtheorem{prop}[thm]{Proposition}
\newtheorem{ex}[thm]{Example}
\newtheorem{eq}[thm]{equation}
\newtheorem{que}{Problem}[section]
\newtheorem{ques}[thm]{Question}
\providecommand{\floor}[1]{\left\lfloor#1\right\rfloor}
\maketitle
\begin{abstract}
We study a new family of random variables, that each arise as the distribution of the maximum or minimum of a random number $N$ of i.i.d.~random variables $X_1,X_2,\ldots,X_N$, each distributed as a variable $X$ with support on $[0,1]$.  The general scheme is first outlined, and several special cases are studied in detail.  Wherever appropriate, we find estimates of the parameter $\theta$ in the one-parameter family in question.
\end{abstract}

\noindent KEYWORDS:  Maximum and Minimum; random number of i.i.d.~variables; statistical inference.
\section{Introduction and General Scheme}    Consider a sequence $X_1, X_2,\ldots$ of i.i.d.~random variables with support on $[0,1]$ and having distribution function $F$.  For any fixed $n$, the distributions of
\[Y=\max_{1\le i\le n}X_i\]
and
\[Z=\min_{1\le i\le n}X_i\]
have been well studied; in fact it is shown in elementary texts that $F_Y(x)=F^n(x)$ and $F_Z(x)=1-(1-F(x))^n$.  But what if we have a situation where the number $N$ of $X_i$'s is random, and we are instead considering the extrema
\begin{equation}Y=\max_{1\le i\le N}X_i\end{equation}
and
\begin{equation}Z=\min_{1\le i\le N}X_i\end{equation}
of a random number of i.i.d.~random variables?  Now the {\it sum} $S$ of a random number of i.i.d.~variables, defined as
\[S=\sum_{i=1}^N X_i\]
satisfies, according to Wald's Lemma \cite{durrett}, the equation
\[\e(S)=\e(N)\e(X),\]
provided that $N$ is independent of the sequence $\{X_i\}$ and assuming that the means of $X$ and $N$ exist.  The purpose of this paper is to show that the distributions in (1) and (2) can be studied in many canonical cases, even if $N$ and $\{X_i\}_{i=1}^\infty$ are correlated.  The main deviation from the papers \cite{expo}, \cite{expo2} and \cite{weib}, where similar questions are studied, is that the variable $X$ is concentrated on the interval $[0,1]$ -- unlike the above references, where $X$ has lifetime-like distributions on $[0,\infty)$.  Even then, we find that many new and interesting distributions arise, none of them to be found, e.g., in \cite{bala} or \cite{bb}.  In another deviation from the theory of extremes of random sequences (see, e.g., \cite{extr}), we find that the tail behavior of the extreme distributions is not relevant due to the fact that the distributions have compact support.  We next cite three examples where our methods might be useful.  First, we might be interested in the strongest earthquake in a given region in a given year.  The number of earthquakes in a year, $N$, is usually modeled using a Poisson distribution, and, ignoring aftershocks and similarly correlated events, the intensities of the earthquakes can be considered to be i.i.d.~random variables in $[a,b]$ whose distribution can be modeled using, e.g., the data set maintained by Caltech at \cite{earth}.  Second, many ``small world" phenomena have recently been modeled by power law distributions, also sometimes termed discrete Pareto or Zipf distributions.  See, for example, the body of work by Chung and her co-authors \cite {chung1}, \cite{chung2}, and the references therein, where vertex degrees $d(v)$ in ``internet-like graphs" $G$ (e.g., the vertices of $G$ are individual webpages, and there is an edge between $v_1$ and $v_2$ if one of the webpages has a link to the other) are shown to be modeled by 
\[\p(d(v)=n)=\frac{[\zeta(k)]^{-1}}{n^k}\]
for some constant $k>1$, where $\zeta(\cdot)$ is the Riemann Zeta function
\[\zeta(k)=\sum_{n=1}^\infty\frac{1}{n^k}.\]  Thus if the vertices $v$ in a large internet graph have some bounded i.i.d.~property $X_i$, then the maximum and minimum values of $X_i$ for the neighbors of a randomly chosen vertex can be modeled using the methods of this paper.  Third, we note that $N$ and the $X_i$ may be correlated, as in the CSUG example (studied systematically in Section 3) where $X_i\sim U[0,1]$ and $N=\inf\{n\ge2: X_n>(1-\theta)\}$ follows the geometric distribution ${\rm Geo}(\theta)$.  This is an example of a situation where we might be modeling the maximum load that a device might have carried before it breaks down due to an excessive weight or current.  It is also feasible in this case that the parameter $\theta$ might be unknown.

Here is our general set-up:  Suppose $X_1, X_2, \dots$ are i.i.d.~random variables following a continuous distribution on $[0,1]$ with probability density and distribution functions given by $f(x)$ and $F(x)$ respectively. $N$ is a random variable following a discrete distribution on $\{1,2,\ldots\}$ with probability mass function given by $\p(N=n)=p(n),~n=1,2,\dotsc$. Let $Y$ and $Z$ be given by (1) and (2) respectively. Then the p.d.f.'s $g$ of $Y$ and $Z$ are derived as follows:
Since\[\p(Y\le y| N=n)=[F(y)]^n,\]
we see that\[g(y|N=n)=n[F(y)]^{n-1}f(y),\]
and consequently, the marginal p.d.f.~of $Y$ is 
\begin{eqnarray}
g(y)&=&\sum_{n=1}^\infty g(y|N=n)\p(N=n) \nonumber\\
&=& f(y)\sum_{n=1}^\infty n[F(y)]^{n-1}p(n). \end{eqnarray}
In a similar fashion, the p.d.f. of $Z$ can be shown to be \begin{equation}g(z)=f(z)\sum_{n=1}^\infty n[1-F(z)]^{n-1}p(n);\end{equation}
what is remarkable is that the sums in (3) and (4) will be shown to assume simple tractable forms in a variety of cases.

Our paper is organized as follows.  In Section 2, we study the case of $X\sim U[0,1]$ and $N\sim{\rm Geo}(\theta)$.  We call this the Standard Uniform Geometric model.  The CSUG (Correlated Standard Uniform Model) is studied in Section 3.  Section 4 is devoted to a summary of a variety of other models.
\section{Standard Uniform Geometric (SUG) Model}    
Since $f(x)=1, 0\le x \le 1$, and 
$\p(N=n)=\theta(1-\theta)^{n-1}\enspace (n=1,2,\ldots)$  for some $\theta\in(0,1)$, we have from (3) that the p.d.f.~of $Y$ in the SUG Model is given by
\begin{eqnarray}g(y)&=&\sum_{n=1}^{\infty}\theta(1-\theta)^{n-1}\times ny^{n-1}\nonumber\\
&=&\frac{\theta}{[1-(1-\theta)y]^2}.
\end{eqnarray}    
\begin{figure}[tbp] 
  \centering
  \includegraphics[bb=132 223 480 569,width=4.68in,height=3.30in,keepaspectratio]{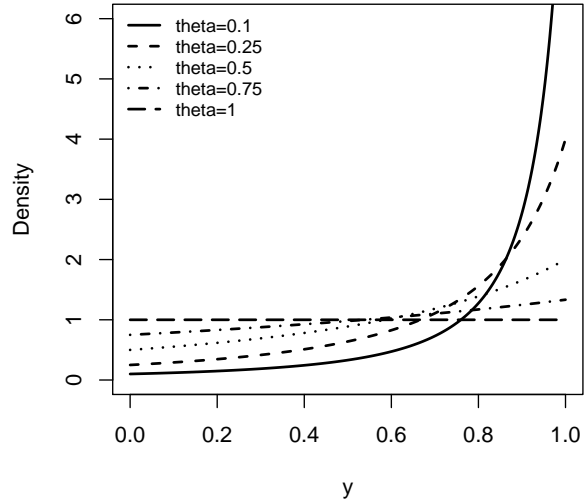}
  \caption{Plot of the SUG maximum density for some values of $\theta$}
  \label{fig:sugy}
\end{figure}
Similarly, (4) gives that
\begin{eqnarray}g(z)&=&\sum_{n=1}^{\infty}\theta(1-\theta)^{n-1}\times n(1-z)^{n-1}\nonumber\\
&=&\frac{\theta}{[1-(1-\theta)(1-z)]^2}\nonumber\\
&=&\frac{\theta}{[\theta+(1-\theta)z]^2}.
\end{eqnarray}
\begin{figure}[tbp] 
  \centering
  \includegraphics[bb=132 223 480 569,width=4.68in,height=3.30in,keepaspectratio]{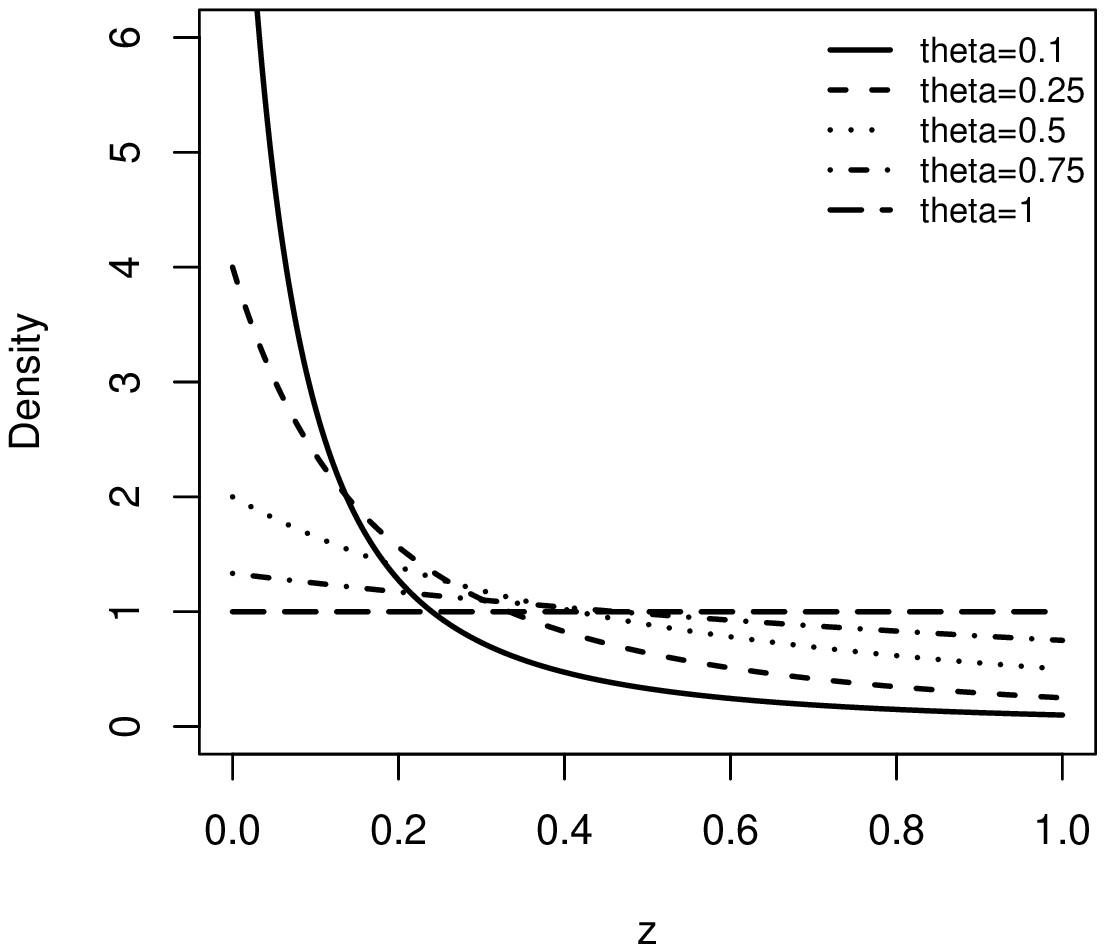}
  \caption{Plot of the SUG minimum density for some values of $\theta$}
  \label{fig:sugz}
\end{figure}
\begin{prop} If the random variable $Y$ has the ``SUG maximum distribution" (5) and $k\in {\mathbb N}$, then
$$\e(Y^k)=\frac{\theta}{(1-\theta)^{k+1}}\sum_{j=0}^{k}{k \choose j}\int_\theta^1(-u)^{j-2}du.$$ \end{prop}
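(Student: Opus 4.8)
The plan is to evaluate $\e(Y^k)=\int_0^1 y^k g(y)\,dy$ directly from the density (5) and then rewrite the resulting elementary integral in the stated form. First I would write
\[
\e(Y^k)=\int_0^1 \frac{\theta\,y^k}{[1-(1-\theta)y]^2}\,dy ,
\]
and perform the substitution $u=1-(1-\theta)y$, so that $y=(1-u)/(1-\theta)$, $dy=-du/(1-\theta)$, and the endpoints $y=0,1$ correspond to $u=1,\theta$ respectively. Collecting constants (the orientation reversal from $y=0\mapsto u=1$ down to $y=1\mapsto u=\theta$ cancels the sign of the Jacobian factor $-1/(1-\theta)$), this turns the expectation into
\[
\e(Y^k)=\frac{\theta}{(1-\theta)^{k+1}}\int_\theta^1 \frac{(1-u)^k}{u^2}\,du .
\]

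Next I would expand $(1-u)^k=\sum_{j=0}^k \binom{k}{j}(-u)^j$ by the binomial theorem, so that $(1-u)^k/u^2=\sum_{j=0}^k \binom{k}{j}(-1)^j u^{j-2}$. Since this is a finite sum, interchanging it with the integral is immediate and requires no convergence argument, giving
\[
\e(Y^k)=\frac{\theta}{(1-\theta)^{k+1}}\sum_{j=0}^k \binom{k}{j}\int_\theta^1 (-1)^j u^{j-2}\,du .
\]
Finally, observing that $(-1)^j u^{j-2}=(-1)^{j-2}u^{j-2}=(-u)^{j-2}$ (using $(-1)^{-2}=1$) rewrites the integrand in the precise form claimed in the Proposition, finishing the argument.

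There is no genuine obstacle here; the statement is an elementary computation. The only points that need a little care are getting the substitution limits and the Jacobian sign right, and the purely cosmetic identification of $(-1)^j u^{j-2}$ with $(-u)^{j-2}$. One could instead expand $y^k$ before integrating, but the substitution above keeps the bookkeeping shortest. If a fully closed form is wanted, the integrals $\int_\theta^1 (-u)^{j-2}\,du$ evaluate to $(-1)^j(1-\theta^{\,j-1})/(j-1)$ for $j\neq 1$ and to $\ln\theta$ for $j=1$, but the Proposition as stated only asks for the pre-integrated expression.
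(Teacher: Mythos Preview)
Your proposal is correct and follows essentially the same route as the paper's own proof: write $\e(Y^k)$ as the integral against the density, substitute $u=1-(1-\theta)y$ to obtain $\frac{\theta}{(1-\theta)^{k+1}}\int_\theta^1 (1-u)^k u^{-2}\,du$, expand $(1-u)^k$ binomially, and interchange the finite sum with the integral. If anything, your treatment of the cosmetic step $(-1)^j u^{j-2}=(-u)^{j-2}$ is more explicit than the paper's, which simply passes from $(-u)^j/u^2$ to $(-u)^{j-2}$ without comment.
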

\begin{proof}
\begin{eqnarray*}\e(Y^k)&=&\int_0^1 y^k \times \frac{\theta}{[1-(1-\theta)y]^2}dy\\
&=&\int_1^\theta \lr\frac{1-u}{1-\theta}\rr^k \times \frac{\theta}{u^2}\times \lr-\frac{1}{1-\theta}\rr du\\
&=&\frac{\theta}{(1-\theta)^{k+1}}\int_\theta^1 \frac{(1-u)^k}{u^2}du\\
&=&\frac{\theta}{(1-\theta)^{k+1}}\int_\theta^1 \frac{\sum_{j=0}^k {k \choose j}(-u)^j}{u^2}du\\
&=&\frac{\theta}{(1-\theta)^{k+1}}\sum_{j=0}^{k}{k \choose j}\int_\theta^1(-u)^{j-2}du, \end{eqnarray*}
as claimed.    \hfill\end{proof}
\begin{prop} The random variable $Y$ has mean and variance given, respectively, by
$$\e(Y)=\frac{\theta(\ln \theta+\displaystyle\frac{1}{\theta}-1)}{(1-\theta)^2}~~~\mbox{and}~~~\v(Y)=\frac{\theta^3-2\theta^2-\theta^2\ln^2\theta +\theta}{(1-\theta)^4}.$$ \end{prop}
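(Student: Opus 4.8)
The plan is to derive both moments from Proposition 2.2 and then assemble the variance through the identity $\v(Y)=\e(Y^2)-[\e(Y)]^2$. So the proof reduces to two substitutions ($k=1$ and $k=2$) into the moment formula, a couple of elementary integrals, and one algebraic simplification.

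First I would set $k=1$ in Proposition 2.2. The sum over $j$ has only the terms $j=0$ and $j=1$: the $j=0$ term is $\int_\theta^1 u^{-2}\,du=\tfrac1\theta-1$, and the $j=1$ term is ${1\choose1}\int_\theta^1(-u)^{-1}\,du=-\int_\theta^1 u^{-1}\,du=\ln\theta$. Adding these and multiplying by $\theta/(1-\theta)^2$ produces the claimed expression for $\e(Y)$ directly. Next I would take $k=2$, which gives three terms: $j=0$ again contributes $\tfrac1\theta-1$; $j=1$ contributes ${2\choose1}\int_\theta^1(-u)^{-1}\,du=2\ln\theta$; and $j=2$ contributes ${2\choose2}\int_\theta^1(-u)^0\,du=1-\theta$. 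Summing, the constants $-1$ and $+1$ cancel, leaving $\tfrac1\theta-\theta+2\ln\theta$, so that $\e(Y^2)=\dfrac{1-\theta^2+2\theta\ln\theta}{(1-\theta)^3}$.

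Finally I would form $\v(Y)=\e(Y^2)-[\e(Y)]^2$ over the common denominator $(1-\theta)^4$. Writing $\e(Y)=\dfrac{\theta\ln\theta+1-\theta}{(1-\theta)^2}$, I would expand the square in the numerator of $[\e(Y)]^2$ as $(\theta\ln\theta)^2+(1-\theta)^2+2\theta(1-\theta)\ln\theta$, and multiply the numerator of $\e(Y^2)$ by $(1-\theta)$ to match denominators. Subtracting, the terms linear in $\ln\theta$ cancel, as do several of the polynomial-in-$\theta$ terms, and what survives in the numerator is exactly $\theta^3-2\theta^2+\theta-\theta^2\ln^2\theta$, which is the stated variance.

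The only real obstacle is the bookkeeping in this last cancellation: a substantial amount collapses, and it is easy to mis-track a sign or a power of $\theta$ when combining $(1-\theta)\bigl(1-\theta^2+2\theta\ln\theta\bigr)$ with the expansion of $(\theta\ln\theta+1-\theta)^2$. Everything else is a routine substitution into Proposition 2.2 together with the integrals $\int_\theta^1 u^{-2}\,du$, $\int_\theta^1 u^{-1}\,du$, and $\int_\theta^1 1\,du$.
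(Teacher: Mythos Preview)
Your proof is correct and follows exactly the approach the paper takes: substitute $k=1,2$ into the moment formula and assemble the variance via $\v(Y)=\e(Y^2)-[\e(Y)]^2$; you simply fill in the elementary integrals and the algebraic cancellation that the paper leaves to the reader. One minor slip: the moment formula you invoke is Proposition~2.1 in the paper's numbering, not Proposition~2.2 (the latter is the statement you are proving).
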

\begin{proof}Using Proposition 2.1, we can directly compute the mean and variance by setting $k=1,2$, and using the fact that $\v(W)=\e(W^2)-[\e(W)]^2$ for any random variable $W$. (This proof could equally well have been based on calculating the moments of $1-(1-\theta)Y$ and then recovering the values of $\e(Y)$ and $\v(Y)$.  The same is true of other proofs in the paper.)\end{proof}
\begin{prop} If the random variable $Z$ has the ``SUG minimum distribution" and $k\in {\mathbb N}$, then
$$\e(Z^k)=\frac{\theta}{(1-\theta)^{k+1}}\sum_{j=0}^{k}{k \choose j}(-\theta)^j\int_\theta^1 u^{k-j-2}du.$$ \end{prop}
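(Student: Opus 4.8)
The plan is to compute $\e(Z^k)=\int_0^1 z^k g(z)\,dz$ directly from the SUG minimum density $g(z)=\theta/[\theta+(1-\theta)z]^2$ established in (6), following the same template as the proof of Proposition 2.1. First I would make the linear substitution $u=\theta+(1-\theta)z$, so that $du=(1-\theta)\,dz$ and $z=(u-\theta)/(1-\theta)$, with the limits transforming as $z=0\mapsto u=\theta$ and $z=1\mapsto u=1$; note that, unlike in Proposition 2.1, this substitution is orientation-preserving, so no sign flip of the limits is needed. This turns the integral into $\dfrac{\theta}{(1-\theta)^{k+1}}\int_\theta^1\dfrac{(u-\theta)^k}{u^2}\,du$.

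Next I would apply the binomial theorem, $(u-\theta)^k=\sum_{j=0}^{k}\binom{k}{j}u^{k-j}(-\theta)^j$, divide through by $u^2$, and interchange the finite sum with the integral to obtain $\e(Z^k)=\dfrac{\theta}{(1-\theta)^{k+1}}\sum_{j=0}^{k}\binom{k}{j}(-\theta)^j\int_\theta^1 u^{k-j-2}\,du$, which is exactly the claimed expression. Leaving the answer in terms of $\int_\theta^1 u^{k-j-2}\,du$ rather than evaluating it is deliberate: the term $j=k-1$ yields $\int_\theta^1 u^{-1}\,du=-\ln\theta$ instead of a power of $u$, so keeping the integral unevaluated lets a single formula cover all $k\in\mathbb N$ uniformly.

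There is no serious obstacle here; the argument is a routine change of variables together with a binomial expansion, entirely parallel to Proposition 2.1. The only points that demand a little care are bookkeeping ones: getting the exponent of $(1-\theta)$ right (it is $k+1$, coming from the $k$th power of $1/(1-\theta)$ in $z^k$ plus one further factor from $dz$), correctly carrying the sign $(-\theta)^j$ through the expansion of $(u-\theta)^k$ (rather than the $(-u)^j$ pattern that arose in the maximum case), and observing that the integrand $z^k g(z)$ is continuous on the compact interval $[0,1]$, so the integral is finite and the term-by-term integration of a finite sum is trivially justified. As noted in the proof of Proposition 2.2, one could alternatively first compute the moments of $\theta+(1-\theta)Z$ and recover $\e(Z^k)$ from those, but the direct route is the shortest.
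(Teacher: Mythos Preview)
Your proposal is correct and follows essentially the same route as the paper's proof: the substitution $u=\theta+(1-\theta)z$ followed by the binomial expansion of $(u-\theta)^k$ and term-by-term integration. Your additional remarks about orientation, the exponent bookkeeping, and the logarithmic $j=k-1$ term are accurate and even a bit more explicit than the paper's presentation.
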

\begin{proof}
\begin{eqnarray*}\e(Z^k)&=&\int_0^1 z^k \times \frac{\theta}{[\theta+(1-\theta)z]^2}dz\\
&=&\int_\theta^1 \lr\frac{u-\theta}{1-\theta}\rr^k \times \frac{\theta}{u^2}\times \frac{1}{1-\theta}du\\
&=&\frac{\theta}{(1-\theta)^{k+1}}\int_\theta^1 \frac{(u-\theta)^k}{u^2}du\\
&=&\frac{\theta}{(1-\theta)^{k+1}}\int_\theta^1 \frac{\sum_{j=0}^k {k \choose j}u^{k-j}(-\theta)^j}{u^2}du\\
&=&\frac{\theta}{(1-\theta)^{k+1}}\sum_{j=0}^{k}{k \choose j}(-\theta)^j\int_\theta^1 u^{k-j-2}du, \end{eqnarray*}
as asserted.\hfill\end{proof}
\begin{prop} The random variable $Z$ has mean and variance given, respectively, by
$$\e(Z)=\frac{\theta(\theta-1-\ln \theta)}{(1-\theta)^2}~~~\mbox{and}~~~\v(Z)=\frac{\theta^3-2\theta^2-\theta^2\ln^2\theta +\theta}{(1-\theta)^4}.$$ \end{prop}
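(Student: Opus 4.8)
The plan is to specialize the moment formula of Proposition~2.3 to $k=1$ and $k=2$, evaluate the handful of elementary integrals that appear, and then assemble the variance through $\v(Z)=\e(Z^2)-[\e(Z)]^2$, exactly as was done for $Y$ in Proposition~2.2.

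For $k=1$, Proposition~2.3 gives $\e(Z)=\frac{\theta}{(1-\theta)^2}\lr\int_\theta^1 u^{-1}\,du-\theta\int_\theta^1 u^{-2}\,du\rr$. The first integral is $-\ln\theta$ and the second is $\frac{1-\theta}{\theta}$; substituting and simplifying produces $\e(Z)=\frac{\theta(\theta-1-\ln\theta)}{(1-\theta)^2}$. One small point to watch is that the exponent $k-j-2$ in Proposition~2.3 equals $-1$ precisely when $j=k-1$, so that term contributes a logarithm rather than a power of $u$ -- this is the only non-power antiderivative that ever enters, here for $j=0$ when $k=1$ and for $j=1$ when $k=2$.

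For $k=2$ the sum has three terms, involving $\int_\theta^1 1\,du=1-\theta$, $\int_\theta^1 u^{-1}\,du=-\ln\theta$, and $\int_\theta^1 u^{-2}\,du=\frac{1-\theta}{\theta}$; collecting them gives $\e(Z^2)=\frac{\theta\left(1-\theta^2+2\theta\ln\theta\right)}{(1-\theta)^3}$. Then I would put $\e(Z^2)-[\e(Z)]^2$ over the common denominator $(1-\theta)^4$, expand the numerator $\theta(1-\theta)(1-\theta^2+2\theta\ln\theta)-\theta^2(\theta-1-\ln\theta)^2$ in monomials $\theta^a\ln^b\theta$, and observe the cancellations: the $\theta^4$ terms cancel, the terms linear in $\ln\theta$ cancel (the $2\theta^2\ln\theta-2\theta^3\ln\theta$ coming from the first product exactly kills the matching cross term from the square of $\e(Z)$), and only $\theta-2\theta^2+\theta^3$ together with the single surviving $-\theta^2\ln^2\theta$ remain, yielding $\v(Z)=\frac{\theta^3-2\theta^2-\theta^2\ln^2\theta+\theta}{(1-\theta)^4}$.

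The main obstacle is nothing conceptual -- it is purely the bookkeeping in that last expansion, keeping careful track of which $\theta^a\ln^b\theta$ terms cancel. As an independent check (and in fact an alternative, calculation-free proof), I would note the distributional identity $1-Z\stackrel{d}{=}Y$: since $(1-X_1,\dots,1-X_N)$ is again an i.i.d.\ $U[0,1]$ sample of the same random size $N$, we have $1-\min_i X_i=\max_i(1-X_i)\stackrel{d}{=}Y$, whence $\e(Z)=1-\e(Y)$ and $\v(Z)=\v(Y)$. Substituting the formulas of Proposition~2.2 reproduces both displayed expressions at once, and also explains the (otherwise mysterious) coincidence $\v(Z)=\v(Y)$.
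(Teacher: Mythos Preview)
Your proposal is correct and follows exactly the route the paper takes: the paper's proof of Proposition~2.4 consists solely of the remark ``Using Proposition~2.3, it is easily to compute the mean and variance by setting $k=1,k=2$,'' and you have carried this out in full detail (and accurately). Your additional observation that $1-Z\stackrel{d}{=}Y$ in the SUG model, yielding $\v(Z)=\v(Y)$ and $\e(Z)=1-\e(Y)$ without any computation, is a genuine bonus not made explicit in the paper; the paper notices the equality $\v(Z)=\v(Y)$ only later (in the CSUG section) and never gives the distributional explanation.
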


\begin{proof}Using Proposition 2.3, it is easily to compute the mean and variance by setting $k=1,k=2$.  \hfill\end{proof}

The m.g.f.'s of $Y,Z$ are easy to calculate too.  Notice that the logarithmic terms above arise due to the contributions of the $j=1$ and $j=k-1$ terms, and it is precisely these logarithmic terms that make, e.g., method of moments estimates for $\theta$ to be intractable in a closed (i.e., non-numerical) form.  Similar difficulties arise when analyzing the likelihood function and likelihood ratios.
\section{The Correlated Standard Uniform Geometric (CSUG) Model}

The Correlated Standard Uniform Geometric (CSUG) model is related to the SUG model, as the name suggests, but $X$ and $N$ are correlated as indicated in Section 1. The CSUG problems arise in two cases. One case is that we conduct standard uniform trials until a variable $X_i$ exceeds $1-\theta$, where $\theta$ is the parameter of the correlated geometric variable, and the maximum of $X_1, X_2,\cdots, X_{i-1}$ is what we seek. The maximum is between 0 and $1-\theta$. The other case is where standard uniform trials are conducted until $X_i$ is less than $\theta$, and we are looking for the minimum of $X_1, X_2,\cdots, X_{i-1}$. The minimum is between $\theta$ and 1.

Specifically, let $X_1, X_2,\cdots$ be a sequence of standard uniform variables
and define $$N=\inf\{n\ge 2:X_i>1-\theta\},$$ or $$N=\inf\{n\ge 2:X_i<\theta\}.$$ In either case $N$ has probability mass function given by 
\begin{equation}\label{eq:pn2}
\p(N=n)=\theta(1-\theta)^{n-2}, 0< \theta< 1, n=2,3,\ldots;
\end{equation}
note that this is simply a geometric random variable conditional on the success having occurred at trial 2 or later.  Clearly $N$ is dependent on the $X$ sequence.
\begin{prop}Under the CSUG model, the p.d.f.~of $Y$, defined by (1), is given by 
$$
g(y)=\frac{\theta}{(1-\theta)(1-y)^2},~0\le y \le 1-\theta.$$\end{prop}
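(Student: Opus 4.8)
The plan is to condition on $N$, use the correlation to pin down the conditional law of the variables whose maximum is $Y$, compute the conditional density of that maximum, and then average over $N$, summing the resulting power series. Concretely, fix $n\ge 2$. The event $\{N=n\}$ says precisely that the first $n-1$ uniforms all fell in $[0,1-\theta]$ while $X_n$ exceeded the threshold; hence, conditionally on $\{N=n\}$, the vector $(X_1,\dots,X_{n-1})$ is uniformly distributed on $[0,1-\theta]^{n-1}$, so that $Y=\max\{X_1,\dots,X_{n-1}\}$ is the maximum of $n-1$ i.i.d.\ $U[0,1-\theta]$ variables. In particular $0\le Y\le 1-\theta$ almost surely, and for $0\le y\le 1-\theta$ one gets
\[\p(Y\le y\mid N=n)=\left(\frac{y}{1-\theta}\right)^{n-1},\qquad g(y\mid N=n)=\frac{(n-1)y^{n-2}}{(1-\theta)^{n-1}}.\]

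Next I would mix over $N$ using $\p(N=n)=\theta(1-\theta)^{n-2}$ from (\ref{eq:pn2}): for $0\le y\le 1-\theta$,
\[g(y)=\sum_{n=2}^{\infty} g(y\mid N=n)\,\p(N=n)=\frac{\theta}{1-\theta}\sum_{n=2}^{\infty}(n-1)y^{n-2}=\frac{\theta}{1-\theta}\sum_{m=1}^{\infty} m\,y^{m-1}=\frac{\theta}{(1-\theta)(1-y)^2},\]
where the last equality uses $\sum_{m\ge 1}m\,y^{m-1}=(1-y)^{-2}$, valid because $0\le y\le 1-\theta<1$; this bound also legitimizes the term-by-term manipulations and the implicit interchange of sum and integration. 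A quick check that $\int_0^{1-\theta} g(y)\,dy=1$ confirms the normalization.

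Finally, I would record a shortcut that bypasses the series: the same conditioning shows that, given $\{N=n\}$, the rescaled variable $Y/(1-\theta)$ is the maximum of $n-1$ i.i.d.\ standard uniforms, and since $N-1\sim{\rm Geo}(\theta)$ on $\{1,2,\dots\}$, the variable $Y/(1-\theta)$ has exactly the SUG maximum density (5); the claimed formula then follows from the change of variables $y=(1-\theta)t$. Either way, the one genuinely non-routine step is the first: correctly reading the correlation, i.e.\ recognizing that conditioning on $\{N=n\}$ renders the pre-breakdown uniforms i.i.d.\ on $[0,1-\theta]$, and that it is these $n-1$ variables --- not $X_N$, which lies above the threshold --- whose maximum is $Y$. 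Once that is settled, the remainder is the geometric-series computation already carried out in Section~2.
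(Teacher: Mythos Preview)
Your proposal is correct and follows essentially the same route as the paper: condition on $\{N=n\}$ to obtain $\p(Y\le y\mid N=n)=(y/(1-\theta))^{n-1}$, differentiate, and then sum the resulting series $\frac{\theta}{1-\theta}\sum_{n\ge2}(n-1)y^{n-2}$ to $(1-y)^{-2}$. Your additional justification that, given $\{N=n\}$, the vector $(X_1,\dots,X_{n-1})$ is i.i.d.\ $U[0,1-\theta]$ makes explicit what the paper simply asserts, and your rescaling shortcut linking $Y/(1-\theta)$ to the SUG maximum is a nice extra observation not in the paper.
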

\begin{figure}[tbp] 
  \centering
  \includegraphics[bb=132 223 480 569,width=4.68in,height=3.30in,keepaspectratio]{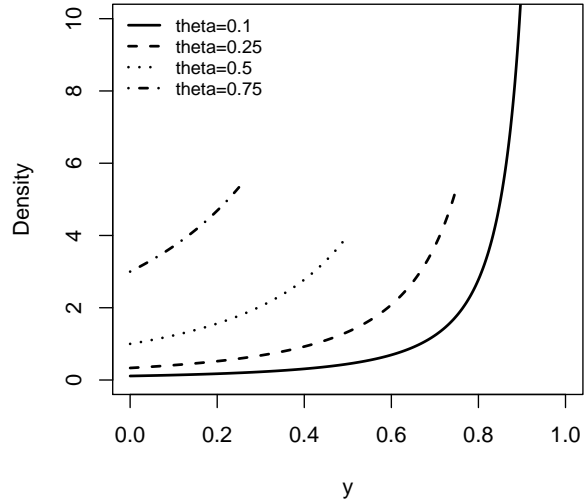}
  \caption{Plot of the CSUG maximum density for some values of $\theta$}
  \label{fig:csugy}
\end{figure}
\begin{proof}The conditional c.d.f.~of $Y$ given that $N=n$ is given by 
$$
\p(Y\le y|N=n)=\lr\frac{y}{1-\theta}\rr^{n-1},~n=2,3,\dotsc.$$
Taking the derivative, we see that the conditional density function is given by
$$
g(y|N=n)=\frac{n-1}{1-\theta}\lr\frac{y}{1-\theta}\rr^{n-2},~n=2,3,\ldots $$
Consequently, the p.d.f.~of $Y$ in the CSUG model is given by
\begin{eqnarray*}g(y)&=&\sum_{n=2}^{\infty}\theta(1-\theta)^{n-2}\times \frac{n-1}{1-\theta}\lr\frac{y}{1-\theta}\rr^{n-2}\\
&=&\frac{\theta}{1-\theta} \times \sum_{n=2}^{\infty}(n-1)y^{n-2}\\
&=&\frac{\theta}{(1-\theta)(1-y)^2}. \end{eqnarray*}  This completes the proof.\hfill
\end{proof}
\begin{prop}The p.d.f.~of $Z$ under the CSUG model is given by
$$
g(z)=\frac{\theta}{(1-\theta)z^2},~\theta \le z \le 1,~n=2,3,\ldots.$$\end{prop}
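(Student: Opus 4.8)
The plan is to mirror the proof of Proposition 3.1, with the maximum replaced by the minimum throughout. First I would fix $n\ge 2$ and condition on the event $\{N=n\}$. Under the definition $N=\inf\{n\ge 2:X_i<\theta\}$, the event $\{N=n\}$ forces each of $X_1,\dots,X_{n-1}$ to lie in $[\theta,1]$, and conditioning a standard uniform variable on exceeding $\theta$ produces a uniform variable on $[\theta,1]$; moreover these $n-1$ conditioned variables remain independent. (The value $X_n<\theta$ plays no role, since $Z$ is the minimum of $X_1,\dots,X_{n-1}$, which lies in $[\theta,1]$.) This conditioning step is the one place where the dependence between $N$ and the $X_i$ must be handled with care, but, just as in Proposition 3.1, it disentangles the two cleanly.

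Second, from this observation the conditional survival function is
\[
\p(Z>z\mid N=n)=\left(\frac{1-z}{1-\theta}\right)^{n-1},\qquad \theta\le z\le 1,
\]
so the conditional density is $g(z\mid N=n)=\dfrac{n-1}{1-\theta}\left(\dfrac{1-z}{1-\theta}\right)^{n-2}$ on the same range.

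Third, I would sum against the mass function \eqref{eq:pn2}:
\[
g(z)=\sum_{n=2}^{\infty}\theta(1-\theta)^{n-2}\cdot\frac{n-1}{1-\theta}\left(\frac{1-z}{1-\theta}\right)^{n-2}
=\frac{\theta}{1-\theta}\sum_{n=2}^{\infty}(n-1)(1-z)^{n-2},
\]
the powers of $(1-\theta)$ cancelling exactly as in Proposition 3.1. Evaluating the series $\sum_{m\ge1}m\,t^{m-1}=(1-t)^{-2}$ at $t=1-z$ then gives $g(z)=\dfrac{\theta}{(1-\theta)(1-(1-z))^2}=\dfrac{\theta}{(1-\theta)z^2}$ for $\theta\le z\le 1$, as claimed.

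I do not anticipate a genuine obstacle here; the only points demanding attention are the indexing (the exponent is $n-1$ rather than $n$, since $Z$ ranges over the first $N-1$ variables) and the support $[\theta,1]$. As an alternative that bypasses the computation entirely, one can observe that $W_i:=1-X_i\sim U[0,1]$ and $\{X_i<\theta\}=\{W_i>1-\theta\}$, so that the CSUG minimum problem for $(X_i)$ is exactly the CSUG maximum problem for $(W_i)$ under the relation $Z=1-Y_W$; Proposition 3.1 together with the change of variables $z\mapsto 1-z$ then delivers the stated density immediately.
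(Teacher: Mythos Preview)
Your proposal is correct and follows essentially the same route as the paper: compute the conditional survival function $\p(Z>z\mid N=n)=\bigl(\tfrac{1-z}{1-\theta}\bigr)^{n-1}$, differentiate to get the conditional density, and sum against $p(n)$ using $\sum_{m\ge1}m\,t^{m-1}=(1-t)^{-2}$. You supply more justification for the conditioning step and append the symmetry shortcut $Z=1-Y_W$, but the core argument is identical to the paper's.
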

\begin{figure}[tbp] 
  \centering
  \includegraphics[bb=132 223 480 569,width=4.68in,height=3.30in,keepaspectratio]{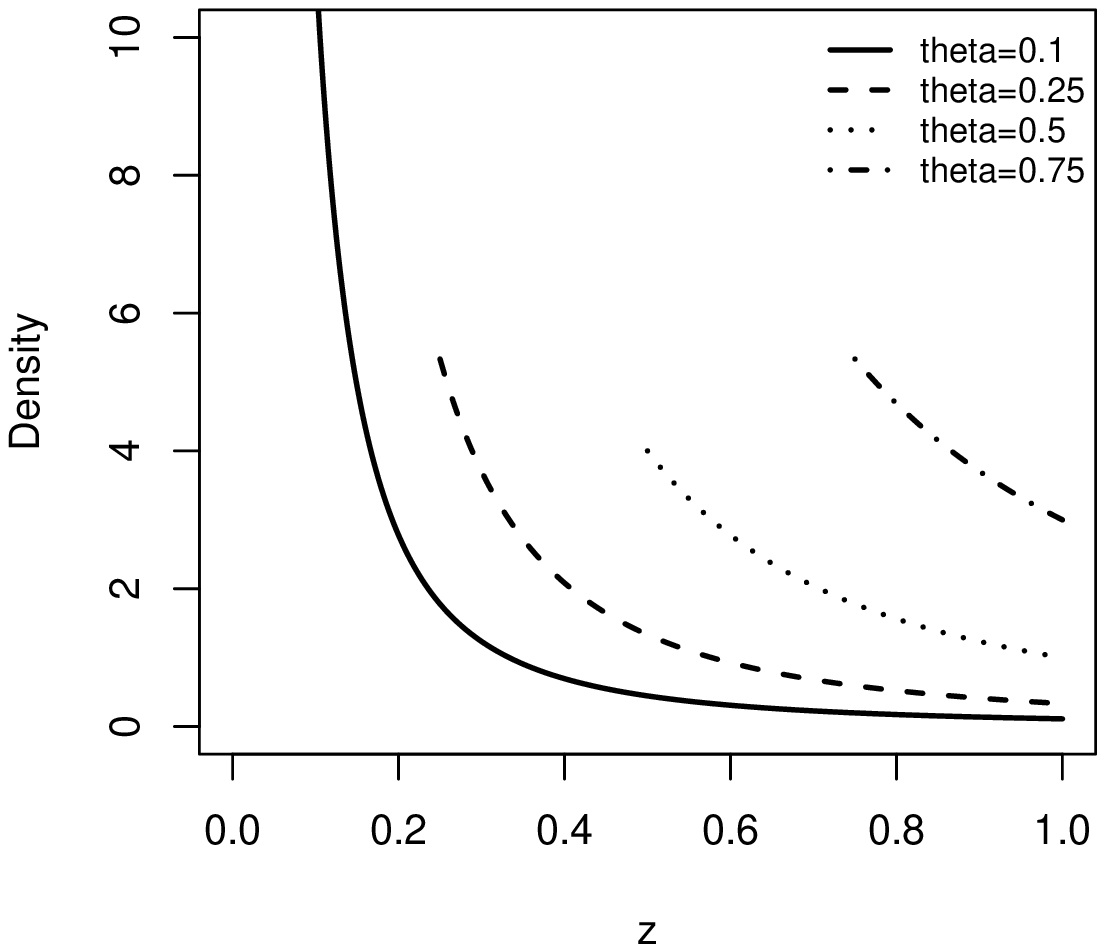}
  \caption{Plot of CSUG minimum density for some values of $\theta$}
  \label{fig:csugz}
\end{figure}

\begin{proof}The conditional cumulative distribution function of $Z$ given that $N=n$ is given by
$$
\p(Z\le z|N=n)=1-\p(Z>z|N=n)=1-\lr\frac{1-z}{1-\theta}\rr^{n-1},~n=2,3,\ldots$$
Thus, the conditional density function is given by 
$$
g(z|N=n)=\frac{n-1}{1-\theta}\lr\frac{1-z}{1-\theta}\rr^{n-2},~n=2,3,\dotsc, $$
which yields the p.d.f.~of $Z$ under the CSUG model as
\begin{eqnarray*}g(z)&=&\sum_{n=2}^{\infty}\theta(1-\theta)^{n-2}\times \frac{n-1}{1-\theta}\lr\frac{1-z}{1-\theta}\rr^{n-2}\\
&=&\frac{\theta}{1-\theta} \times \sum_{n=2}^{\infty}(n-1)(1-z)^{n-2}\\
&=&\frac{\theta}{(1-\theta)z^2}, \end{eqnarray*}
which finishes the proof.  \end{proof}

\begin{prop}If the random variable $Y$ has the ``CSUG maximum distribution" and $k\in {\mathbb N}$, then
$$\e(Y^k)=\frac{\theta}{1-\theta}\sum_{j=0}^{k}{k \choose j}\int_\theta^{1} (-u)^{j-2}du.$$ \end{prop}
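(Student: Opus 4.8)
The plan is to follow the proof of Proposition 2.1 almost verbatim, with the CSUG maximum density from Proposition 3.1 in place of the SUG density. First I would write, using that $Y$ is supported on $[0,1-\theta]$,
$$\e(Y^k)=\int_0^{1-\theta} y^k\cdot\frac{\theta}{(1-\theta)(1-y)^2}\,dy.$$

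Next I would make the change of variables $u=1-y$, so that $du=-\,dy$, the limits $y=0$ and $y=1-\theta$ become $u=1$ and $u=\theta$ respectively, and $y^k=(1-u)^k$. This reduces the integral to
$$\e(Y^k)=\frac{\theta}{1-\theta}\int_\theta^1\frac{(1-u)^k}{u^2}\,du.$$
Then I would expand $(1-u)^k=\sum_{j=0}^{k}{k\choose j}(-u)^j$ by the binomial theorem and pull the (finite) sum outside the integral, obtaining
$$\e(Y^k)=\frac{\theta}{1-\theta}\sum_{j=0}^{k}{k\choose j}\int_\theta^1\frac{(-u)^j}{u^2}\,du=\frac{\theta}{1-\theta}\sum_{j=0}^{k}{k\choose j}\int_\theta^1(-u)^{j-2}\,du,$$
where the last equality uses $(-u)^j/u^2=(-u)^{j-2}$ (valid since $(-1)^{-2}=1$). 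This is exactly the claimed identity.

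There is no genuine obstacle here: interchanging summation and integration is immediate because the sum has only $k+1$ terms, and every step is elementary. The only point warranting a little care is the bookkeeping of the substitution — tracking the reversal of the limits of integration and the sign of $du$ — just as in the proof of Proposition 2.1. As in the remark following Proposition 2.2, one could alternatively observe that $1-Y$ has density $\tfrac{\theta}{(1-\theta)u^2}$ on $[\theta,1]$ (so $1-Y$ is distributed as the CSUG minimum), compute its moments, and recover $\e(Y^k)$ via the binomial expansion of $(1-(1-Y))^k$; but the direct computation above is the shortest route.
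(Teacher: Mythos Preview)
Your proof is correct and follows essentially the same route as the paper: start from the CSUG maximum density on $[0,1-\theta]$, substitute $u=1-y$, and then binomially expand $(1-u)^k$ before interchanging the finite sum with the integral. Your additional remark about the alternative via the distribution of $1-Y$ parallels the comment made after Proposition~2.2.
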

\begin{proof}
\begin{eqnarray*}\e(Y^k)&=&\int_0^{1-\theta} y^k \times \frac{\theta}{(1-\theta)(1-y)^2}dy\\
&=&\frac{\theta}{1-\theta}\int_1^\theta \frac{(1-u)^k}{u^2}(-du)\\  
&=&\frac{\theta}{1-\theta}\int_\theta^1 \frac{\sum_{j=0}^{k}{k \choose j}(-u)^j}{u^2}du\\
&=&\frac{\theta}{1-\theta}\sum_{j=0}^{k}{k \choose j}\int_\theta^1 (-u)^{j-2}du, \end{eqnarray*}
as claimed.\hfill\end{proof} 
\begin{prop} The random variable $Y$ has mean and variance given, respectively, by
$$\e(Y)=\frac{\theta \ln\theta-\theta+1}{1-\theta}$$and $$\v(Y)=\frac{\theta^3-2\theta^2-\theta^2\ln^2\theta+\theta}{(1-\theta)^2}.$$ \end{prop}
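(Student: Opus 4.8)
The plan is to obtain both formulae by specializing the moment identity of the preceding proposition to $k=1$ and $k=2$, in exactly the spirit of Propositions 2.1--2.2 for the SUG model. For $k=1$ the sum over $j$ collapses to the two terms $j=0,1$; using $(-u)^{-2}=u^{-2}$, $(-u)^{-1}=-u^{-1}$ together with the elementary antiderivatives $\int u^{-2}\,du=-u^{-1}$ and $\int u^{-1}\,du=\ln u$, one gets
$$\e(Y)=\frac{\theta}{1-\theta}\lr\int_\theta^1 u^{-2}\,du-\int_\theta^1 u^{-1}\,du\rr=\frac{\theta}{1-\theta}\lr\frac{1-\theta}{\theta}+\ln\theta\rr=\frac{1-\theta+\theta\ln\theta}{1-\theta},$$
which is the asserted value of $\e(Y)$.

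For $k=2$ there are three terms $j=0,1,2$ with binomial weights $1,2,1$; the new ($j=2$) contribution is $\int_\theta^1(-u)^{0}\,du=1-\theta$, so
$$\e(Y^2)=\frac{\theta}{1-\theta}\lr\frac{1-\theta}{\theta}+2\ln\theta+(1-\theta)\rr=\frac{1-\theta^2+2\theta\ln\theta}{1-\theta}.$$
I would then form $\v(Y)=\e(Y^2)-[\e(Y)]^2$, place both terms over the common denominator $(1-\theta)^2$, and expand $(1-\theta+\theta\ln\theta)^2=(1-\theta)^2+2(1-\theta)\theta\ln\theta+\theta^2\ln^2\theta$ in the numerator before collecting terms.

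The only non-mechanical step is this final collection: once the pieces are combined, the $\theta\ln\theta$ and $\theta^2\ln\theta$ contributions cancel in pairs, leaving exactly $\theta-2\theta^2+\theta^3-\theta^2\ln^2\theta$ on top, i.e.
$$\v(Y)=\frac{\theta^3-2\theta^2-\theta^2\ln^2\theta+\theta}{(1-\theta)^2},$$
as claimed. I anticipate no genuine obstacle here beyond careful bookkeeping; it is worth noting that this numerator is the same one that appears for the SUG maximum in Proposition 2.2, the sole difference being the power of $(1-\theta)$ in the denominator. As in the SUG case, an alternative would be to pass to the affine transform $1-Y$ — whose density is $\theta/((1-\theta)u^2)$ on $[\theta,1]$, i.e.\ precisely the CSUG minimum density obtained in Proposition 3.2 — and recover $\e(Y)$ and $\v(Y)$ from its moments, but the direct specialization of the preceding proposition is the shortest route.
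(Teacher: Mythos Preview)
Your proposal is correct and follows precisely the paper's own approach: the paper's proof of this proposition consists of the single line ``Using Proposition 3.3, we can directly compute the mean and variance by setting $k=1,2$,'' together with the remark (which you also make) that the numerator of $\v(Y)$ coincides with the SUG case. You have simply supplied the routine details that the paper leaves to the reader.
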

\begin{proof}Using Proposition 3.3, we can directly compute the mean and variance by setting $k=1, 2$.  Notice that the variance of $Y$ is smaller than that of $Y$ under the SUG model, with an identical numerator term.  Also, the expected value is smaller under the CSUG model than in the SUG case.
\end{proof} 
\begin{prop} If the random variable $Z$ has the ``CSUG Minimum distribution" and $k\in {\mathbb N}$, then
$$\e(Z^k)=\frac{\theta}{1-\theta}\int_\theta^1z^{k-2}dz.$$ \end{prop}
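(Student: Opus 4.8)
The plan is to read off the density of $Z$ from Proposition 3.2, namely $g(z) = \theta/[(1-\theta)z^2]$ on the support $[\theta,1]$, and then compute the moment directly. I would write $\e(Z^k) = \int_\theta^1 z^k\, g(z)\, dz$, insert the explicit form of $g$, and pull the constant $\theta/(1-\theta)$ outside the integral. Since the integrand is $z^k \cdot z^{-2} = z^{k-2}$, this collapses immediately to $\frac{\theta}{1-\theta}\int_\theta^1 z^{k-2}\, dz$, which is exactly the asserted identity. No change of variables is needed, in contrast to the CSUG maximum case (Proposition 3.3): there the density is a power of $1-y$, forcing the substitution $u = 1-y$ and a binomial expansion, whereas here the density of $Z$ is already a pure power of $z$, so the computation is a single line.

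I do not expect any genuine obstacle. The only points worth a moment's care are verifying that the limits of integration coincide with the support $[\theta,1]$ recorded in Proposition 3.2, and noting the case split hidden in the final integral: for $k=1$ one gets $\int_\theta^1 z^{-1}\, dz = -\ln\theta$, reproducing the logarithmic term whose presence (as remarked after Proposition 2.4) blocks closed-form method-of-moments estimation of $\theta$, while for $k\ge 2$ the integral is the elementary power-rule antiderivative $\frac{1-\theta^{k-1}}{k-1}$. One could alternatively deduce the formula by first computing the moments of the affine image $\theta + (1-\theta)Z$ and unwinding, as the authors suggest elsewhere, but the direct substitution is the shortest route and is the one I would present.
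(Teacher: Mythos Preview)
Your proposal is correct and is exactly the computation the paper has in mind: the paper's own proof consists solely of the phrase ``Routine, as before,'' i.e., insert the density $g(z)=\theta/[(1-\theta)z^2]$ from Proposition~3.2 into $\int_\theta^1 z^k g(z)\,dz$ and simplify. Your observation that no substitution or binomial expansion is needed here (unlike Proposition~3.3) is precisely why the authors deemed it routine.
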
 
\begin{proof}
Routine, as before.\hfill\end{proof}
\begin{prop} The random variable $Z$ has mean and variance given, respectively, by
$$\e(Z)=\frac{-\theta\ln \theta}{1-\theta}$$and$$\v(Z)=\frac{\theta^3-2\theta^2-\theta^2\ln^2\theta+\theta}{(1-\theta)^2}.$$ \end{prop}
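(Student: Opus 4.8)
The plan is to lean entirely on Proposition 3.5, which already packages the $k$-th moment of $Z$ in the compact form $\e(Z^k)=\frac{\theta}{1-\theta}\int_\theta^1 z^{k-2}\,dz$, and then merely specialize to $k=1$ and $k=2$. So the entire argument is two one-line integrations followed by the identity $\v(Z)=\e(Z^2)-[\e(Z)]^2$.

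First I would set $k=1$ in Proposition 3.5. The integrand becomes $z^{-1}$, so $\int_\theta^1 z^{-1}\,dz=-\ln\theta$, and therefore $\e(Z)=\frac{\theta}{1-\theta}\cdot(-\ln\theta)=\frac{-\theta\ln\theta}{1-\theta}$, which is the claimed mean. I would note in passing that the logarithm here is precisely the $k=1$ (equivalently, $j=k-1=0$ exponent giving $z^{-1}$) effect already flagged after Proposition 2.4. Next I would set $k=2$: now the integrand is $z^{0}=1$, so $\int_\theta^1 1\,dz=1-\theta$, the factor $(1-\theta)$ cancels against the prefactor, and one is left with the strikingly clean $\e(Z^2)=\theta$.

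Finally I would assemble the variance: $\v(Z)=\e(Z^2)-[\e(Z)]^2=\theta-\dfrac{\theta^2\ln^2\theta}{(1-\theta)^2}$. Placing both terms over the common denominator $(1-\theta)^2$ and expanding $\theta(1-\theta)^2=\theta^3-2\theta^2+\theta$ yields the numerator $\theta^3-2\theta^2+\theta-\theta^2\ln^2\theta$, which is exactly the asserted $\theta^3-2\theta^2-\theta^2\ln^2\theta+\theta$. The only thing that could be called an obstacle is the bookkeeping in that last expansion — there is no analytic content beyond the two elementary integrals — and as a sanity check I would observe, in parallel with the remark following Proposition 3.4 for $Y$, that this numerator is identical to the one appearing in the SUG minimum variance (Proposition 2.4), the two differing only in the power of $(1-\theta)$ in the denominator.
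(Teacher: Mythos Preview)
Your proof is correct and follows exactly the route the paper intends: it, too, simply cites Proposition 3.5 and specializes to $k=1,2$ (its entire proof reads ``A special case of Proposition 3.5; note that as in the SUG model, $\v(Y)=\v(Z)$''). You have merely written out the two integrals and the variance algebra in full, and your closing remark about the shared numerator with the SUG variance is precisely the observation the paper also highlights.
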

\begin{proof}A special case of Proposition 3.5; note that as in the SUG model, $\v(Y)=\v(Z)$. 
\hfill\end{proof}

\subsection{Parameter Estimation} The intermingling of polynomial and logarithmic terms makes method of moments estimation difficult in closed form, as in the SUG case.  However, if $\theta$ is unknown, the maximum likelihood estimate of $\theta$ can be found in a satisfying form, both in the CGUG maximum and CSUG minimum cases.  Suppose that $Y_1, Y_2, \ldots, Y_n$ form a random sample from the CSUG Maximum distribution with unknown $\theta$. Since the pdf of each observation has the following form:
\[ f(y|\theta) = \left\{ 
  \begin{array}{l l}
    \frac{\theta}{(1-\theta)(1-y)^2}, & \quad \text{for $0\le y\le 1-\theta$}\\
    0, & \quad \text{otherwise}
  \end{array} \right.\]
the likelihood function is given by 
\[ \ell(\theta) = \left\{ 
  \begin{array}{l l}
    (\frac{\theta}{1-\theta})^n\frac{1}{\prod_{i=1}^{n}(1-y_i)^2}, & \quad \text{for $0\le y_i \le 1-\theta~(i=1,2,\ldots,n)$}\\
    0, & \quad \text{otherwise}
  \end{array} \right.\]
The MLE of $\theta$ is a value of $\theta$, where $\theta \le 1-y_i~\mbox{for}~i=1, 2,\ldots, n$, which maximizes $\frac{\theta}{1-\theta}$. Let $\varphi(\theta)=\frac{\theta}{1-\theta}$. Since $\varphi'(\theta)\ge 0$, it follows that $\varphi(\theta)$ is a increasing function, which means the MLE is the largest possible value of $\theta$ such that $\theta \le 1-y_i~\mbox{for}~i=1,2,\ldots, n$. Thus, this value should be $1-\max(Y_1,\dots, Y_n)$, i.e., $\hat{\theta}=1-Y_{(n)}$.

Suppose next that $Z_1, Z_2, \dots, Z_n$ form a random sample from the CSUG minimum distribution. Since the pdf of each observation has the following form:
\[ f(z|\theta) = \left\{ 
  \begin{array}{l l}
    \frac{\theta}{(1-\theta)z^2}, & \quad \text{for $\theta \le z\le 1$}\\
    0 & \quad \text{otherwise,}
  \end{array} \right.\]
it follows that the likelihood function is given by 
\[ \ell(\theta) = \left\{ 
  \begin{array}{l l}
    (\frac{\theta}{1-\theta})^n\frac{1}{\prod_{i=1}^{n}z_i^2}, & \quad \text{for $\theta \le y_i \le 1~(i=1,2,\ldots,n)$}\\
    0 & \quad \text{otherwise.}
  \end{array} \right.\]
As above, it now follows that $\hat{\theta}=Y_{(1)}$.

\section{A Summary of Some Other Models}  The general scheme given by (3) and (4) is quite powerful.  As another example, suppose (using the example from Section 1) that 
\[p(n)=\frac{6}{\pi^2}\frac{1}{n^2}\] and $X\sim U[0,1]$.  Then it is easy to show that 
\[g(y)=\frac{6}{\pi^2}\frac{1}{y}\ln\lr\frac{1}{1-y}\rr, 0\le y\le 1,\]
and that $\e(Y)=\frac{6}{\pi^2}$.  (The expected value of $Y$ can also be calculated by using the identity
$\e(Y)=\e(\e(Y|N))$.  In this section, we collect some more results of this type, without proof:

\medskip

\noindent {UNIFORM-POISSON MODEL}  Here we let $X\sim U[0,1]$ and $p(n)=\frac{e^{-\lambda}\lambda^n}{(1-e^{-\lambda})n!}, n=1,2,\ldots$, so that $N$ follows a left-truncated Poisson distribution.

\begin{prop}
Under the Uniform-Poisson model,
\[g(y)=\frac{\lambda e^{-\lambda}e^{\lambda y}}{1-e^{-\lambda}};
g(z)=\frac{\lambda e^{-\lambda z}}{1-e^{-\lambda}};
\]
\[
\e(Y)=\frac{1}{1-e^{-\lambda}}-\displaystyle\frac{1}{\lambda};
\e(Z)=\displaystyle\frac{1}{\lambda}-\frac{e^{-\lambda}}{1-e^{-\lambda}};\]
\[\v(Y)=\displaystyle\frac{1}{\lambda^2}+\frac{1}{1-e^{-\lambda}}-\frac{1}{(1-e^{-\lambda})^2};
\v(Z)=\displaystyle\frac{1}{\lambda^2}-\frac{e^{-\lambda}}{\lambda(1-e^{-\lambda})}-\frac{e^{-2\lambda}}{(1-e^{-\lambda})^2};\]
$$M_Y(t)=\e(e^{ty})=\frac{\lambda e^{-\lambda}(e^{t+\lambda}-1)}{(t+\lambda)(1-e^{-\lambda})};M_Z (t)=\e(e^{tz})=\frac{\lambda(e^{t-\lambda}-1)}{(t-\lambda)(1-e^{-\lambda})}.$$
\end{prop}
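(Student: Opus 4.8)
The plan is to work through the statement in four stages: derive the two densities $g(y)$ and $g(z)$ from the general formulas (3) and (4), obtain the moment generating functions by a single elementary integration, and then read off the means and variances.

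First I would substitute $f(y)\equiv 1$, $F(y)=y$ and $p(n)=\frac{e^{-\lambda}\lambda^n}{(1-e^{-\lambda})n!}$ into (3). Cancelling $n/n!=1/(n-1)!$ and reindexing with $m=n-1$ turns the resulting series into an exponential series:
$$g(y)=\frac{e^{-\lambda}}{1-e^{-\lambda}}\sum_{n=1}^{\infty}\frac{\lambda^n y^{n-1}}{(n-1)!}=\frac{e^{-\lambda}}{1-e^{-\lambda}}\sum_{m=0}^{\infty}\frac{\lambda^{m+1}y^m}{m!}=\frac{\lambda e^{-\lambda}e^{\lambda y}}{1-e^{-\lambda}}.$$
The computation of $g(z)$ from (4) is identical, with $1-F(z)=1-z$ in place of $y$; this gives $g(z)=\frac{\lambda e^{-\lambda}e^{\lambda(1-z)}}{1-e^{-\lambda}}=\frac{\lambda e^{-\lambda z}}{1-e^{-\lambda}}$. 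A quick check that each density integrates to $1$ over $[0,1]$ confirms the normalization and, incidentally, re-derives the fact that $\sum_n p(n)=1$.

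Next I would compute the m.g.f.'s directly from the densities: $M_Y(t)=\e(e^{tY})=\frac{\lambda e^{-\lambda}}{1-e^{-\lambda}}\int_0^1 e^{(t+\lambda)y}\,dy=\frac{\lambda e^{-\lambda}(e^{t+\lambda}-1)}{(t+\lambda)(1-e^{-\lambda})}$, and likewise $M_Z(t)=\frac{\lambda}{1-e^{-\lambda}}\int_0^1 e^{(t-\lambda)z}\,dz=\frac{\lambda(e^{t-\lambda}-1)}{(t-\lambda)(1-e^{-\lambda})}$. Since $\lambda>0$, the factors $t\pm\lambda$ do not vanish near $t=0$, so both m.g.f.'s are smooth at the origin and all moments are available by differentiation; as a sanity check, $M_Y(0)=M_Z(0)=1$. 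Alternatively, and perhaps more cleanly, $\e(Y^k)$ and $\e(Z^k)$ for $k=1,2$ can be computed directly by integrating $y^k g(y)$ and $z^k g(z)$ with one or two integrations by parts (for instance $\int_0^1 y\lambda e^{\lambda y}\,dy=e^{\lambda}-\lambda^{-1}(e^{\lambda}-1)$), which sidesteps the rational factor $\frac{e^{t\pm\lambda}-1}{t\pm\lambda}$ entirely.

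Finally I would assemble $\e(Y)$ and $\e(Z)$ from the first moments and then $\v(Y)=\e(Y^2)-[\e(Y)]^2$, $\v(Z)=\e(Z^2)-[\e(Z)]^2$ from the second moments, collecting everything over the common denominators $1-e^{-\lambda}$ and $(1-e^{-\lambda})^2$. I expect the main friction to be exactly this last algebraic consolidation: showing that the variance of $Y$ reduces to $\frac{1}{\lambda^2}+\frac{1}{1-e^{-\lambda}}-\frac{1}{(1-e^{-\lambda})^2}$ and similarly for $Z$ is pure bookkeeping, but several $e^{-\lambda}$ and $\lambda$ factors nearly cancel and the simplification must be handled carefully. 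If one instead extracts the moments by differentiating the m.g.f.'s, the only subtlety is differentiating the quotient $\frac{e^{t\pm\lambda}-1}{t\pm\lambda}$ at $t=0$; writing $e^{t\pm\lambda}-1=(e^{\pm\lambda}-1)+e^{\pm\lambda}(e^t-1)$, or simply Taylor-expanding in $t$, makes this routine, and the two routes agree.
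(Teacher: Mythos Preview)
Your proposal is correct and complete: the paper explicitly states this proposition \emph{without proof}, so there is no argument to compare against, but your derivation is exactly the one the paper's general scheme (formulas (3) and (4)) invites, and the subsequent m.g.f.\ and moment computations are routine and accurate as you describe. The only remark worth adding is that the symmetry $Z\overset{d}{=}1-Y$ (immediate from $g(z)=g_Y(1-z)$) lets you obtain all of the $Z$ quantities from the $Y$ ones without repeating any integration.
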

In some sense, the primary motivation of this paper was to produce extreme value distributions that did not fall into the Beta family (such as $f(y)=nt^{n-1}$ for the maximum of $n$ i.i.d.~$U[0,1]$ variables).  A wide variety of non-Beta-based distributions may be found in \cite{bb}.  Can we add extreme value distributions to that collection?  In what follows, we use both the Beta families $B(2,2)$ and $B(1/2, 1/2)$, the arcsine distribution, and a ``Beyond Beta" distribution, the Topp-Leone distribution, as ``input variables" to make further progress in this direction.

\medskip
\noindent GEOMETRIC-BETA(2,2) MODEL.  Here $X\sim B(2,2)$ and $N\sim{\rm Geo}(\theta)$.  In this case we get 
\[g(y)=\frac{6y(1-y)\theta}{[1-(1-\theta)y^2(3-2y)]^2}\]
and
\[g(z)=\frac{6z(1-z)\theta}{[1-(1-\theta)(2z^3-3z^2+1)]^2}.\]

\medskip
\noindent POISSON-BETA(2,2) MODEL.  Here $X\sim B(2,2)$ and $N\sim{\rm Po0}(\theta)$, the Poisson($\theta$) distribution left-truncated at 0.  In this case we get 
\[g(y)=\frac{6\theta y(1-y)e^{-\theta(2y^3-3y^2+1)}}{1-e^{-\theta}}\]
and
\[g(z)=\frac{6\theta z(1-z)e^{-\theta(3z^2-2z^3)}}{1-e^{-\theta}}.\]

\medskip
\noindent GEOMETRIC-ARCSINE MODEL.  Here $X\sim B(1/2,1/2)$ and $N\sim{\rm Geo}(\theta)$.  In this case we get 
\[g(y)=\frac{\theta\pi^{-1}[y(1-y)]^{-1/2}}{[1-(1-\theta)\frac{2}{\pi}\arcsin\sqrt{y}]^2}\]
and
\[g(z)=\frac{\theta\pi^{-1}[z(1-z)]^{-1/2}}{[1-(1-\theta)(1-\frac{2}{\pi})\arcsin\sqrt{z}]^2}.\]

\medskip
\noindent POISSON-ARCSINE MODEL.  Here $X\sim B(1/2,1/2)$ and $N\sim{\rm Po0}(\theta)$.  Here we have  
\[g(y)=\frac{\theta \pi^{-1}[y(1-y)]^{-1/2}e^{-\theta(1-\frac{2}{\pi}\arcsin\sqrt{y})}}{1-e^{-\theta}}\]
and
\[g(z)=\frac{\theta \pi^{-1}[z(1-z)]^{-1/2}e^{-\frac{2\theta \arcsin\sqrt{z}}{\pi}}}{1-e^{-\theta}}.\]

\medskip
\noindent GEOMETRIC-TOPP-LEONE MODEL.  Here $X\sim TL(a)$ and $N\sim{\rm Geo}(\theta)$: 
\[g(y)=\frac{2a(1-y)y^{a-1}(2-y)^{a-1}\theta}{[1-(1-\theta)y^a(2-y)^a]^2}\]
and
\[g(z)=\frac{2a(1-z)z^{a-1}(2-z)^{a-1}\theta}{\{1-(1-\theta)[1-z^a(2-z)^a]\}^2}.\]

\medskip
\noindent POISSON-TOPP-LEONE MODEL.  $X\sim TL(a)$ and $N\sim{\rm Po0}(\theta)$:
\[g(y)=\frac{2\theta a (1-y)y^{a-1}(2-y)^{a-1}e^{-\theta[1-y^a(2-y)^a]}}{1-e^{-\theta}}\]
and
\[g(z)=\frac{2\theta a (1-z)z^{a-1}(2-z)^{a-1}e^{-\theta[z^a(2-z)^a]}}{1-e^{-\theta}}.\]

\section{Acknowledgments} The research of AG was supported by NSF Grants 1004624 and 1040928.

\end{document}